\newtheorem{theorem}{Theorem}
\newtheorem{lemma}{Lemma}
\newtheorem{example}{Example}
\begin{document}
\title{ On Products of  Generalized Geometries}
\author{Ralph R. Gomez}
\address{ Department of Mathematics and Statistics\\
		Swarthmore College \\
		Swarthmore, PA 19081}
		
\author{Janet Talvacchia}
\address{ Department of Mathematics and Statistics\\
		Swarthmore College \\
		Swarthmore, PA 19081}

\date{October 11,2014}

\begin{abstract}
In this paper we address what generalized geometric structures are possible on products of spaces that each admit generalized geometries.  In particular we consider, first, the product of two odd dimensional spaces that each admit a generalized almost contact structure, and then subsequently,  the product of an odd dimensional space that admits a generalized almost contact structure and an even dimensional space that admits a generalized almost complex structure. We also draw attention to the relationship of the Courant bracket to the classical notion of normality for almost contact structures.
\end{abstract}

\maketitle

\section{Introduction}\label{S:intro}

The notion of a generalized complex structures, introduced by Hitchin in his paper \cite{[H]} and developed by his student Gualtieri \cite{[G1]},\cite{[G2]} is a framework that unifies both complex and symplectic structures.  These structures exist only on even dimensional manifolds. The odd dimensional analog of this structure, a generalized contact structure, was taken up by  Vaisman \cite {[V1]},\cite{[V2]}  Wade-Poon \cite{[PW]} and Sekiya \cite{[S]}. This framework unifies almost contact, contact, and cosymplectic structures.  In this paper we consider when products of manifolds that themselves admit generalized geometric structures also admit a generalized geometric structure.
 The main results of the paper are the following:
\begin{theorem}\label{T1}
Let $M_1$ and $M_2$ be odd dimensional smooth manifolds each with a generalized almost contact structures $(\Phi_i, E_{+,i}, E_{-,i}) ,\  \  i=1,2$.  Then $M_1\times M_2$ admits a generalized almost complex structure $\mathcal J$.  Further $\mathcal J$ is  a generalized complex structure if and only if both $(\Phi_i, E_{+,i}, E_{-,i})\  \   i=1,2$ are strong generalized contact structures and $[[E_{\pm, i},E_{\mp ,i}]] = 0$.
\end{theorem}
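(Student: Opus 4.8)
The plan is to construct $\mathcal{J}$ by a generalized analogue of the classical Morimoto construction and then read off integrability from its $+i$-eigenbundle. First I would identify the generalized tangent bundle of the product, $T(M_1\times M_2)\oplus T^*(M_1\times M_2)$, with the fibrewise direct sum $(TM_1\oplus T^*M_1)\oplus(TM_2\oplus T^*M_2)$, writing a section as a pair $(A_1,A_2)$ and equipping the sum with the pairing $\langle(A_1,A_2),(B_1,B_2)\rangle=\langle A_1,B_1\rangle_1+\langle A_2,B_2\rangle_2$. Normalizing $\langle E_{+,i},E_{-,i}\rangle=\frac{1}{2}$, $\langle E_{\pm,i},E_{\pm,i}\rangle=0$, I would then set
\[
\mathcal{J}(A_1,A_2)=\bigl(\Phi_1 A_1-2\langle A_2,E_{+,2}\rangle E_{-,1}-2\langle A_2,E_{-,2}\rangle E_{+,1},\ \Phi_2 A_2+2\langle A_1,E_{+,1}\rangle E_{-,2}+2\langle A_1,E_{-,1}\rangle E_{+,2}\bigr).
\]
That $\mathcal{J}^2=-\mathrm{Id}$ and that $\mathcal{J}$ is orthogonal for the product pairing are then purely algebraic checks: expanding $\mathcal{J}^2$ and repeatedly using $\Phi_i E_{\pm,i}=0$, the normalization above, the structure equation $\Phi_i^2=-\mathrm{Id}+2\langle\,\cdot\,,E_{+,i}\rangle E_{-,i}+2\langle\,\cdot\,,E_{-,i}\rangle E_{+,i}$, and skew-symmetry of each $\Phi_i$. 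The one point worth flagging is that a naive single cross term (the literal analogue of the classical formula) does \emph{not} close up: the two symmetric correction terms appearing in each $\Phi_i^2$ are cancelled precisely by the \emph{two} cross terms I have inserted. This establishes the first assertion, that $\mathcal{J}$ is a generalized almost complex structure.

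For the integrability statement I would pass to the $+i$-eigenbundle $L_{\mathcal{J}}\subset\bigl(T(M_1\times M_2)\oplus T^*(M_1\times M_2)\bigr)\otimes\mathbb{C}$. A direct computation on generators gives
\[
L_{\mathcal{J}}=L_{\Phi_1}\oplus L_{\Phi_2}\oplus\mathrm{span}_{\mathbb{C}}\{\,(E_{+,1},-iE_{+,2}),\ (E_{-,1},-iE_{-,2})\,\},
\]
where $L_{\Phi_i}$ is the $+i$-eigenbundle of $\Phi_i$: one checks $\mathcal{J}$ sends $(E_{\pm,1},0)\mapsto(0,E_{\pm,2})\mapsto-(E_{\pm,1},0)$, and any $A_i\in L_{\Phi_i}$ satisfies $\langle A_i,E_{\pm,i}\rangle=0$, so $(A_i,0)$ is already a $+i$-eigenvector. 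By Gualtieri's criterion $\mathcal{J}$ is a generalized complex structure exactly when $L_{\mathcal{J}}$ is involutive for the Courant bracket of the product. The decisive structural fact I would use is that on a product the Courant bracket of a section pulled back from $M_1$ with one pulled back from $M_2$ vanishes; hence the bracket respects the splitting, and every bracket of generators of $L_{\mathcal{J}}$ decomposes into an $M_1$-piece and an $M_2$-piece computed by the factor brackets.

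Writing $W_\pm:=(E_{\pm,1},-iE_{\pm,2})$, involutivity of $L_{\mathcal{J}}$ then reduces to a short list. The brackets $[[L_{\Phi_i},L_{\Phi_i}]]$ are pure $M_i$-sections and must land in $L_{\mathcal{J}}\cap\bigl((TM_i\oplus T^*M_i)\otimes\mathbb{C}\bigr)=L_{\Phi_i}$, forcing each $L_{\Phi_i}$ to be Courant-involutive; together with $[[L_{\Phi_i},W_\pm]]=([[L_{\Phi_i},E_{\pm,i}]]_i,0)\subseteq L_{\Phi_i}$ this is exactly the statement that $(\Phi_i,E_{+,i},E_{-,i})$ is a strong generalized contact structure, while all cross-factor brackets vanish automatically. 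The genuinely mixed bracket is
\[
[[W_+,W_-]]=\bigl([[E_{+,1},E_{-,1}]]_1,\ -[[E_{+,2},E_{-,2}]]_2\bigr),
\]
whose $M_1$- and $M_2$-components vary independently, so it lies in $L_{\mathcal{J}}$ for arbitrary $M_1,M_2$ if and only if $[[E_{+,i},E_{-,i}]]=[[E_{-,i},E_{+,i}]]=0$ for $i=1,2$. Both directions of the claimed equivalence fall out of this same accounting: the conditions are forced (necessity) and they suffice to close every generator bracket (sufficiency). I expect the matching in the last display to require the most care—verifying that $[[W_+,W_-]]$ cannot be absorbed into $L_{\mathcal{J}}$ by any nontrivial combination of the $E_{\pm,i}$, which is where the independence of the two factors pins down the vanishing of $[[E_{\pm,i},E_{\mp,i}]]$ rather than a weaker joint condition. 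I would also record that, in the skew-symmetric convention, the self-brackets $[[W_\pm,W_\pm]]$ vanish identically and impose nothing further.
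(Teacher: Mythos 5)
Your overall architecture matches the paper's: identify $T(M_1\times M_2)\oplus T^*(M_1\times M_2)$ with the direct sum of the two factors' big tangent bundles, define $\mathcal J$ with \emph{two} cross terms per component, identify the $\sqrt{-1}$-eigenbundle as the two factor eigenbundles plus a two-dimensional mixed piece, and check involutivity generator by generator. Your formula for $\mathcal J$ differs from the paper's (you pair against $E_{\pm,2}$ and output $E_{\mp,1}$, the paper outputs $E_{\pm,1}$, so its mixed generators are $(E_{\pm,1},-\sqrt{-1}E_{\mp,2})$ rather than your $(E_{\pm,1},-\sqrt{-1}E_{\pm,2})$); both variants satisfy $\mathcal J^2=-\mathrm{Id}$ and $\mathcal J+\mathcal J^*=0$ because of the symmetry of the correction term $E_+\otimes E_-+E_-\otimes E_+$, and both lead to the same integrability conditions, so this is harmless. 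Your treatment of the mixed bracket $[[W_+,W_-]]$ is actually cleaner than the paper's, which invokes the Courant--Nijenhuis tensor at that step: since $([[E_{+,1},E_{-,1}]],-[[E_{+,2},E_{-,2}]])$ is a \emph{real} section and $L_{\mathcal J}\cap\overline{L_{\mathcal J}}=0$, membership in $L_{\mathcal J}$ forces both components to vanish. I would phrase it that way rather than via ``varying $M_1,M_2$ independently,'' since you need the conclusion for the given manifolds, not for a generic pair.

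The one genuine gap is the sentence asserting that involutivity of $L_{\Phi_i}=E_i^{(1,0)}$ together with $[[E_i^{(1,0)},E_{\pm,i}]]\subseteq E_i^{(1,0)}$ ``is exactly the statement'' that $(\Phi_i,E_{+,i},E_{-,i})$ is strong. By definition, strong means that $L^\pm_i=L_{E_{\pm,i}}\oplus E_i^{(1,0)}$ are both involutive, which a priori only gives $[[E_i^{(1,0)},E_i^{(1,0)}]]\subseteq E_i^{(1,0)}\oplus L_{E_{\pm,i}}$, i.e.\ allows a component along $E_{\pm,i}$. In the sufficiency direction this matters: $(E_{+,1},0)$ is \emph{not} in $L_{\mathcal J}$ (only the mixed combination $W_+$ is), so if the bracket of two sections of $(E_1^{(1,0)},0)$ picked up an $E_{+,1}$-component, $L_{\mathcal J}$ would fail to be involutive even though $\Phi_1$ is strong. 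The paper isolates precisely this point as its Lemma~2: applying $\Phi^2=-\mathrm{Id}+E_+\otimes E_-+E_-\otimes E_+$ to the bracket and comparing with the eigenvalue equation kills the coefficient along $E_\pm$, so that strongness is in fact equivalent to $[[L^\pm,E^{(1,0)}]]\subseteq E^{(1,0)}$. You need to supply this argument (or cite such a lemma); with it in place, both directions of your accounting go through.
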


This result generalizes a classical theorem of Morimoto \cite{[M]}.

The next theorem considers products of odd and even dimensional spaces:

\begin{theorem}\label{T2}
Let $M_1$ be an odd dimensional smooth manifold with a generalized almost contact structure $(\Phi, E_+, E_-)$ and let $M_2$ be an even dimensional smooth manifold with a generalized almost complex structure $\mathcal J$.  Then $M_1 \times M_2$ admits a generalized almost contact structure $\Psi$.  Further $\Psi $ is a generalized contact structure if and only if $\Phi$ is a generalized contact structure and $\mathcal J$ is a generalized complex structure. $\Psi $ is  a strong  generalized contact structure if and only if $\Phi$ is a strong generalized contact structure and $\mathcal J$ is a generalized complex structure.
\end{theorem}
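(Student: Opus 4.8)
The plan is to realize $\Psi$ as a block-diagonal structure that acts by $\Phi$ on the $M_1$-summand and by $\mathcal J$ on the $M_2$-summand. Using the canonical identification
$$T(M_1\times M_2)\oplus T^*(M_1\times M_2)\;\cong\;\bigl(TM_1\oplus T^*M_1\bigr)\oplus\bigl(TM_2\oplus T^*M_2\bigr),$$
I would set $\Psi=\Phi\oplus\mathcal J$ and take $E_\pm$ to be the pullbacks to $M_1\times M_2$ of the sections $E_\pm$ on $M_1$. The first step is the purely algebraic check that $(\Psi,E_+,E_-)$ is a generalized almost contact structure. Because $E_\pm$ are supported entirely in the $M_1$-factor, because $\mathcal J^2=-\mathrm{Id}$ and $\mathcal J$ is skew with respect to the natural pairing, and because $\Phi^2=-\mathrm{Id}+\langle E_+,\cdot\rangle E_-+\langle E_-,\cdot\rangle E_+$ with $\Phi E_\pm=0$, each of the defining relations for $\Psi$ (including skew-symmetry and the normalization of $\langle E_+,E_-\rangle$) decomposes as a direct sum of the corresponding relation for $\Phi$ on $M_1$ and for $\mathcal J$ on $M_2$. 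Every such relation therefore holds, which proves the existence assertion of the theorem independently of any integrability hypothesis.

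For the integrability statements the essential ingredient is the behavior of the Courant bracket on a product. I would first establish that $[[A_1,A_2]]=0$ whenever $A_1$ is pulled back from $M_1$ and $A_2$ from $M_2$: the Lie bracket of their vector parts vanishes across factors, and the Lie-derivative and $d\langle\cdot,\cdot\rangle$ terms that appear in the Courant bracket vanish for the same reason. Hence the Courant bracket of $M_1\times M_2$ preserves the above direct-sum decomposition and splits as the sum of the two factor brackets. Applying this to the Nijenhuis-type torsion built from $\Psi$ (equivalently, to the closure of the relevant eigenbundles), I expect it to decompose as the sum of the torsion of $\Phi$ evaluated on $M_1$-arguments and the torsion of $\mathcal J$ evaluated on $M_2$-arguments, with the mixed terms dropping out. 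Since the torsion of $\mathcal J$ vanishes exactly when $\mathcal J$ is a generalized complex structure, the full torsion of $\Psi$ vanishes if and only if $\Phi$ is normal in the appropriate sense and $\mathcal J$ is integrable. Reading this off against the definitions gives both equivalences at once: the subbundle whose Courant-closure encodes the generalized contact condition for $\Psi$ is the direct sum of the corresponding subbundle for $\Phi$ and the $+i$-eigenbundle $L_{\mathcal J}$ of $\mathcal J$, and likewise for the enlarged subbundle governing the strong condition, so closure of the sum is equivalent to closure of each summand.

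The step I expect to be the main obstacle is matching the precise normality conditions: distinguishing ``generalized contact'' from ``strong generalized contact'' and checking that the factorwise closures line up with them, while confirming that---unlike the two-contact-factor situation of Theorem \ref{T1}---no extra hypothesis such as $[[E_\pm,E_\mp]]=0$ intervenes. The conceptual reason, which the bracket decomposition makes visible, is that the distinguished directions $E_\pm$ are carried over intact from $M_1$ and are retained by $\Psi$ as its contact directions; their bracket relations are thus already controlled by whether $\Phi$ is (strongly) normal, and are never entangled with a second contact factor as they are in Theorem \ref{T1}.
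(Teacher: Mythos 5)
Your proposal is correct and takes essentially the same route as the paper: define $\Psi=\Phi\oplus\mathcal J$ block-diagonally under the splitting of $T(M_1\times M_2)\oplus T^*(M_1\times M_2)$, pull back $E_\pm$ from $M_1$, verify the algebraic axioms factorwise, and observe that the relevant maximal isotropics decompose as $L^\pm_{M_1\times M_2}\approx L^\pm_{M_1}+L$ so that Courant involutivity of the sum is equivalent to involutivity of each summand. The paper argues directly via closure of these eigenbundles rather than through a torsion tensor, but you note that equivalence yourself, and your observation that no condition $[[E_\pm,E_\mp]]=0$ intervenes matches the paper's statement.
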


In the last section, we look at the relationship of the condition $[[E_{\pm, i},E_{\mp, i}]] = 0$ to the notion of normality for a classical almost contact structure.

\section{Background on Generalized Complex Structures and Generalized Contact Structures}\label{B:Back}
Throughout this paper we let $M$ be a smooth manifold. Consider the big tangent bundle $TM\oplus T^*M$.  We define a neutral metric on $TM\oplus T^*M$ by $  \langle X + \alpha  , Y + \beta \rangle =  \frac{1}{2} (\beta (X) + \alpha (Y) )$ and the Courant bracket by $[[X+\alpha, Y+ \beta ]] = [X,Y] + {\mathcal L}_X\beta -{\mathcal L}_Y\alpha -\frac{1}{2} d(\iota_X\beta - \iota_Y\alpha)$ where $X, Y \in TM$ and $\alpha ,\beta  \in T^*M$. A subbundle of $TM\oplus T^*M$ is said to be involutive if its sections are closed under the Courant bracket.

A generalized  almost complex structure on $M$ is an endomorphism $\mathcal J$ of $TM\oplus T^*M$ such that $\mathcal J + \mathcal J^* =  0 $ and $\mathcal J^2 = - Id$. Since $\mathcal J^2 = -Id$, $\mathcal J$ has eigenvalues $\pm \sqrt{-1}$.  Let $L$ be the $\sqrt{-1}$ eigenbundle of $\mathcal J$.  We say $\mathcal J$ is a generalized complex structure (alternately, we say ${\mathcal J}$ is integrable) if $L$ is involutive.  $L$ is a maximal isotropic with respect to $\langle , \rangle$.  Note that if $ X + \alpha $ is a null vector so is $\mathcal J ( X + \alpha)$.  By adding further null vectors and extending out to a maximal isotropic, we see that maximal isotropics must be even dimensional.  Since $TM$ is a maximal isotropic, we see that $M$ must be even dimensional in order to admit a generalized almost complex structure.

\begin{example}  \cite {[G1]}
Let $(M^{2n}, J)$ be a complex structure.  Then we get an integrable generalized almost complex structure by setting
$$\mathcal J = \left ( \begin{array}{cc}  -J & 0 \\ 0 & J^* \end{array} \right ).$$
\end{example}

\begin{example}  \cite {[G1]}
Let $(M^{2n}, \omega )$ be a symplectic structure.  The we get an integrable  generalized almost complex structure by setting
$$\mathcal J = \left ( \begin{array}{cc}  0 & -\omega^{-1} \\ \omega & 0 \end{array} \right ).$$
\end{example}

The analog of this structure for odd dimensional spaces is a generalized almost contact structure. We use the definition given in \cite{[S]}.  For an odd dimensional manifold $M$, we define a generalized almost contact structure to be a triple $(\Phi, E_+ , E_-)$ where $\Phi $ is an endomorphism of $TM\oplus T^*M$, and $E_+$ and $E_-$ are sections of $TM\oplus T^*M$ which satisfy

\begin{equation}
\Phi + \Phi^{*}=0
\end{equation}

\begin{equation}\label{phi}
\Phi \circ \Phi = -Id + E_+ \otimes E_- + E_- \otimes E_+
\end{equation}

\begin{equation}\label{sections}
 \langle E_\pm, E_\pm \rangle = 0,  \  \    2\langle E_+, E_-  \rangle = 1
\end{equation}

Since $\Phi$ satisfies $\Phi^3 + \Phi =0$, we see that $\Phi$ has $0$ as well as $\pm \sqrt{-1}$ as eigenvalues.  The kernel of $\Phi$ is $L_{E_+} \oplus L_{E_-}$ where $L_{E_\pm}$ is the line bundle spanned by ${E_\pm}$.  Let $E^{(1,0)}$ be the $\sqrt{-1}$ eigenbundle.  Let $E^{(0,1)}$ be the $-\sqrt{-1}$ eigenbundle.

$$
E^{(1,0)} = \lbrace X + \alpha - \sqrt{-1}  \Phi (X + \alpha ) |  \langle E_\pm, X + \alpha \rangle = 0 \rbrace
$$

$$
E^{(0,1)} = \lbrace X + \alpha + \sqrt{-1}  \Phi (X + \alpha ) | \langle E_\pm, X + \alpha \rangle = 0 \rbrace $$

Then we have the complex line bundles
$$L^+ = L_{E_+} \oplus E^{(1,0)}$$
and
$$L^- = L_{E_-} \oplus E^{(1,0)}$$
are maximal isotropics.  We say $(\Phi, E_+, E_-)$ is a generalized contact structure (alternately we say $\Phi$ is integrable) if either of $L^\pm$ is involutive.  We say $(\Phi, E_+, E_-)$ is a strong generalized contact structure (alternately we say $\Phi$ is strongly integrable) if both $L^+$ and $L^-$ are involutive.

\begin{example}  \cite {[PW]}
Let $(\phi ,  \xi, \eta)$ be an almost contact structure on a manifold $M^{2n+1}$.  Then we get a  generalized almost contact structure  by setting
$$ \Phi = \left ( \begin{array}{cc}  \phi & 0 \\ 0 & -\phi^* \end{array} \right ),\  \   E_+ = \xi,\  \   E_-= \eta $$  where $(\phi^*\alpha )(X) = \alpha (\phi (X)), \   X \in TM,\   \alpha \in T^{*}M$.
\end{example}

\begin{example}  \cite {[PW]}
Let $( M^{2n+1}, \eta )$ be a contact manifold.  Let $\xi $ be the corresponding Reeb vector field so that
$$ \iota_\xi d\eta = 0 \  \  \  \eta ( \xi ) = 1.$$
Then $$\rho ( X) := \iota_X d\eta - \eta ( X)\eta$$ is an isomorphism from the tangent bundle to the cotangent bundle.  Define a bivector field by
$$\pi (\alpha , \beta ) := d\eta (\rho^{-1}( \alpha ), \rho^{-1}( \beta ))$$
then we get a generalized almost contact structure by setting
$$ \Phi = \left ( \begin{array}{cc}  0 & \pi \\ d\eta & 0 \end{array} \right ),\  \   E_+ = \eta,\  \   E_-= \xi .$$
It was shown that this example is not strong.

\end{example}

\section{ Products of Manifolds Admitting Generalized Structures}

If $M_1$ and $M_2$ are odd dimensional manifolds admitting generalized almost contact structures $(\Phi_1, E_{+,1}, E_{-,1})$ and $(\Phi_2, E_{+,2}, E_{-,2})$ does the even dimensional manifold $M_1\times M_2$ necessarily admit a generalized almost complex structure?  If so, what can be said about its integrability?   The classical statement of this question was resolved in 1963 by Morimoto \cite{[M]}. He showed that if $M_1$ and $M_2$ are almost contact manifolds each with  an almost contact structure $(\phi_i, \xi_i, \eta_i), \  \  i= 1,2$ then one can define an almost complex structure on the product $M_1\times M_2$ by
$$J(X_1, X_2) = (\phi_1(X_1) - \eta_2(X_2)\xi_1, \phi_2(X_2) + \eta_1(X_1)\xi_2)$$  Moreover, he proved that $J$ is integrable if and only if the almost contact structures $(\phi_i, \xi_i, \eta_i), i= 1,2$  are normal. We prove an analog of this theorem in the generalized setting:
\begingroup
\def\thetheorem{\ref{T1}}
\begin{theorem}
Let $M_1$ and $M_2$ be odd dimensional manifolds  each with a generalized almost contact structure $(\Phi_i, E_{+, i}, E_{-, i}) ,\  \  i=1,2$.  Then $M_1\times M_2$ admits a generalized almost complex structure $\mathcal J$.  Further $\mathcal J$ is integrable  if and only if both $(\Phi_i, E_{+, i}, E_{-, i})\  \   i=1,2$ are strong generalized contact structures and $[[E_{\pm, i},E_{\mp, i}]] = 0$.
\end{theorem}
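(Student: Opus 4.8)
The plan is to imitate Morimoto's construction at the level of the big tangent bundle. Using the splitting $T(M_1\times M_2)\oplus T^*(M_1\times M_2)\cong\bigl(TM_1\oplus T^*M_1\bigr)\oplus\bigl(TM_2\oplus T^*M_2\bigr)$, I would write a section as a pair $(u_1,u_2)$ and define
\[
\mathcal J(u_1,u_2)=\bigl(\Phi_1 u_1-2\langle E_{-,2},u_2\rangle E_{+,1}-2\langle E_{+,2},u_2\rangle E_{-,1},\ \Phi_2 u_2+2\langle E_{-,1},u_1\rangle E_{+,2}+2\langle E_{+,1},u_1\rangle E_{-,2}\bigr).
\]
Taking $\Phi_i=\mathrm{diag}(\phi_i,-\phi_i^*)$, $E_{+,i}=\xi_i$, $E_{-,i}=\eta_i$ recovers Morimoto's $J$ on the tangent part, so this is the natural candidate. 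First I would verify the algebraic axioms. Skew-symmetry $\mathcal J+\mathcal J^*=0$ follows because $\Phi_i^*=-\Phi_i$ and the four cross terms pair off symmetrically under $\langle\,,\,\rangle$. For $\mathcal J^2=-\mathrm{Id}$ I would expand using $\Phi_i E_{\pm,i}=0$, the relation $\Phi_i^2=-\mathrm{Id}+E_{+,i}\otimes E_{-,i}+E_{-,i}\otimes E_{+,i}$ (read as $(A\otimes B)u=2\langle B,u\rangle A$), the normalizations $\langle E_{\pm,i},E_{\pm,i}\rangle=0$, and that $\langle E_{+,i},E_{-,i}\rangle$ is the constant $\tfrac12$; the symmetric second cross term is exactly what cancels the residual $2\langle E_{+,i},u_i\rangle E_{-,i}$ that the naive one-term formula would leave behind.

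Next I would identify the $\sqrt{-1}$-eigenbundle $L$ of $\mathcal J$. A direct computation gives
\[
L=\bigl(E_1^{(1,0)}\oplus 0\bigr)\oplus\bigl(0\oplus E_2^{(1,0)}\bigr)\oplus\mathbb C\bigl(\sqrt{-1}E_{+,1},E_{+,2}\bigr)\oplus\mathbb C\bigl(\sqrt{-1}E_{-,1},E_{-,2}\bigr),
\]
which has the correct rank for a maximal isotropic on the product. Writing $\epsilon_{\pm}=(\sqrt{-1}E_{\pm,1},E_{\pm,2})$, the module $L$ is generated over $C^\infty$ by sections of $E_1^{(1,0)}$, sections of $E_2^{(1,0)}$, and $\epsilon_{+},\epsilon_{-}$. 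Because $L$ is isotropic, the Leibniz rule $[[a,fb]]=f[[a,b]]+(\rho(a)f)b-\langle a,b\rangle\,df$ shows involutivity can be tested on these generators alone, so integrability of $\mathcal J$ reduces to a finite list of bracket conditions.

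I would then compute those brackets. Cross-factor brackets of pulled-back sections vanish identically (the Lie derivatives and contractions across factors are zero), so the mixed brackets between $E_1^{(1,0)}$ and $E_2^{(1,0)}$, and the two cross terms in $[[\epsilon_+,\epsilon_-]]$, drop out. A bracket $([[u,u']]_1,0)$ with $u,u'\in E_1^{(1,0)}$ lies in $L$ iff $[[u,u']]_1\in E_1^{(1,0)}$; since $E_1^{(1,0)}=L_1^{+}\cap L_1^{-}$, this forces both $L_1^{+}$ and $L_1^{-}$ to be involutive, i.e. each factor is strong, and conversely strength delivers it. The brackets $[[\epsilon_{\pm},u]]$ collapse to $\sqrt{-1}[[E_{\pm,i},u]]_i$, and $[[\epsilon_+,\epsilon_-]]$ collapses to $(-[[E_{+,1},E_{-,1}]]_1,[[E_{+,2},E_{-,2}]]_2)$; these are precisely the places where the condition $[[E_{\pm,i},E_{\mp,i}]]=0$ must enter.

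The bridge between the two kinds of condition, and the step I expect to be the main obstacle, is the Courant-pairing identity
\[
\langle E_{-,i},[[E_{+,i},u]]\rangle=-\langle u,[[E_{+,i},E_{-,i}]]\rangle\qquad(\langle E_{\pm,i},u\rangle=0),
\]
proved from the compatibility of the Courant bracket with $\langle\,,\,\rangle$ and the constancy of $\langle E_{+,i},E_{-,i}\rangle$. For sufficiency this is easy: if each factor is strong and $[[E_{+,i},E_{-,i}]]=0$, the identity kills the $E_{+,i}$-component of $[[E_{+,i},u]]$, placing it in $E_i^{(1,0)}$, while $[[\epsilon_+,\epsilon_-]]=0$, so $L$ is involutive. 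The delicate direction is necessity: involutivity yields $\langle u,[[E_{+,i},E_{-,i}]]\rangle=0$ for $u\in E_i^{(1,0)}$, so the $E_i^{(0,1)}$-part of $[[E_{+,i},E_{-,i}]]$ vanishes, and a parallel contraction against $E_{\pm,i}$ shows its kernel components vanish automatically. The remaining $E_i^{(1,0)}$-part, however, is essentially invisible to the holomorphic test vectors sitting inside $L$; extracting its vanishing — and thereby upgrading these partial contractions to the full identity $[[E_{\pm,i},E_{\mp,i}]]=0$ — is the crux, and is where the precise form of the generators $\epsilon_\pm$, the reality of $\mathcal J$, and the interplay of all generator brackets must be exploited with care so that no component of $[[E_{+,i},E_{-,i}]]$ escapes detection.
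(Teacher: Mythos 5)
Your construction is in substance the paper's own: the same splitting of the big tangent bundle, a $\mathcal J$ that differs from the paper's only by swapping $E_{+,2}\leftrightarrow E_{-,2}$ in the cross terms (both versions satisfy $\mathcal J+\mathcal J^*=0$ and $\mathcal J^2=-Id$; yours is the one that literally restricts to Morimoto's $J$), the same description of the $\sqrt{-1}$-eigenbundle, and the same reduction of involutivity to a finite list of generator brackets. Your sufficiency argument is sound; the pairing identity $\langle E_{-,i},[[E_{+,i},u]]\rangle=-\langle u,[[E_{+,i},E_{-,i}]]\rangle$ plays the role of the paper's Lemma~2 (which in fact shows that strongness \emph{alone} already forces $[[E_{\pm,i},E_i^{(1,0)}]]\subset E_i^{(1,0)}$, so the hypothesis $[[E_{\pm,i},E_{\mp,i}]]=0$ is needed only for the single bracket $[[\epsilon_+,\epsilon_-]]$). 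The genuine gap is exactly where you flag it: in the necessity direction you never prove $[[E_{+,i},E_{-,i}]]=0$. You note that involutivity of $L$ controls the $E_i^{(0,1)}$-part and some kernel components of this bracket, declare the $E_i^{(1,0)}$-part ``essentially invisible,'' and leave its vanishing as ``the crux.'' A proof cannot stop there.

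There are two ways to close it. The paper sidesteps the difficulty by invoking the equivalent criterion $N_{\mathcal J}=0$ and evaluating the Courant--Nijenhuis tensor on the \emph{real} pair $((E_{+,1},0),(E_{-,1},0))$: since $\mathcal J$ sends each of these into the $M_2$-factor, all cross-factor brackets vanish and one gets $N_{\mathcal J}((E_{+,1},0),(E_{-,1},0))=(-[[E_{+,1},E_{-,1}]],\,[[E_{+,2},E_{-,2}]])$, whose vanishing gives both conditions on the nose, with no component left undetected. Alternatively, your own route does close, using precisely the reality you allude to: $[[\epsilon_+,\epsilon_-]]=(-[[E_{+,1},E_{-,1}]],[[E_{+,2},E_{-,2}]])$ must lie in $L$; each $[[E_{+,i},E_{-,i}]]$ is a real section, so its $E_i^{(0,1)}$-component is the conjugate of its $E_i^{(1,0)}$-component, and since $L$ contains no $E_i^{(0,1)}$-directions both components vanish together --- the $(1,0)$-part is not invisible, it is chained to the visible $(0,1)$-part by conjugation. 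For the kernel components, membership in $L$ forces $-[[E_{+,1},E_{-,1}]]$ to equal $\sqrt{-1}c_+E_{+,1}+\sqrt{-1}c_-E_{-,1}$ while $[[E_{+,2},E_{-,2}]]$ equals $c_+E_{+,2}+c_-E_{-,2}$ with the \emph{same} constants $c_\pm$ (they are the coordinates along $\epsilon_\pm$); reality of the two brackets then makes $c_\pm$ simultaneously purely imaginary and real, hence zero. You need to write out one of these arguments for the proof to be complete.
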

\addtocounter{theorem}{-1}
\endgroup

Before we prove this theorem, we address some technical lemmas.
\begin{lemma} Let $M$ be an odd dimensional manifold admitting a generalized almost contact structure $(\Phi, E_+, E_-)$.  Then for any $X+\alpha \in TM \oplus T^*M$, $\langle E_\pm, \Phi (X+\alpha) \rangle = 0$.
\end{lemma}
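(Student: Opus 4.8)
The plan is to derive this identity purely from the pointwise algebraic axioms of a generalized almost contact structure, with no integrability or Courant-bracket computation involved. The two ingredients are the skew-adjointness of $\Phi$ encoded in the axiom $\Phi + \Phi^* = 0$, together with the fact that each $E_\pm$ lies in the kernel of $\Phi$.

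First I would record that, since $\langle\cdot,\cdot\rangle$ is symmetric and $\Phi^* = -\Phi$, the endomorphism $\Phi$ is skew with respect to the neutral metric, i.e. $\langle \Phi A, B\rangle = -\langle A, \Phi B\rangle$ for all sections $A, B$ of $TM\oplus T^*M$. Next I would use the description of the kernel recorded in the preamble, $\ker\Phi = L_{E_+}\oplus L_{E_-}$, which gives in particular $\Phi E_+ = 0$ and $\Phi E_- = 0$. With these two facts the conclusion is immediate: for any $X+\alpha$,
$$\langle E_\pm, \Phi(X+\alpha)\rangle = \langle \Phi(X+\alpha), E_\pm\rangle = -\langle X+\alpha, \Phi E_\pm\rangle = -\langle X+\alpha, 0\rangle = 0,$$
where the first equality is symmetry of the metric, the second is skew-adjointness, and the third uses $\Phi E_\pm = 0$.

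The only point requiring genuine care — and the step I would treat as the real content — is the justification that $\Phi E_\pm = 0$. If one simply quotes the kernel description already stated, there is nothing further to do. Otherwise I would derive it directly from the defining relations: left-multiplying (\ref{phi}) by $\Phi$ and comparing with $\Phi^3 + \Phi = 0$ forces $\langle E_-, A\rangle\,\Phi E_+ + \langle E_+, A\rangle\,\Phi E_- = 0$ for every section $A$. Because (\ref{sections}) makes the pairing nondegenerate on $\mathrm{span}\{E_+, E_-\}$, choosing $A = E_+$ (so $\langle E_-, E_+\rangle = \tfrac12$ while $\langle E_+, E_+\rangle = 0$) isolates $\Phi E_+ = 0$, and $A = E_-$ isolates $\Phi E_- = 0$. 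Either route closes the argument, which is otherwise a one-line consequence of skew-adjointness.
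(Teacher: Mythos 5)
Your argument is correct, and it reaches the conclusion by a genuinely different route than the paper. The paper's proof also starts from $\Phi(E_\pm)=0$ (quoting Sekiya's Lemma 3.5), but then applies $\Phi^2$ to $\Phi(X+\alpha)$, expands via $\Phi\circ\Phi=-\mathrm{Id}+E_+\otimes E_-+E_-\otimes E_+$ to land on $0=\langle E_+,\Phi(X+\alpha)\rangle E_-+\langle E_-,\Phi(X+\alpha)\rangle E_+$, and finishes by invoking the pointwise linear independence of $E_+$ and $E_-$ to kill each coefficient. You instead never touch the quadratic identity in the main computation: you use only the axiom $\Phi+\Phi^*=0$ to get skew-adjointness and then transfer $\Phi$ across the pairing onto $E_\pm$, where it dies. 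Your route is shorter, avoids the linear-independence step entirely, and makes transparent that the lemma is purely a consequence of skew-adjointness plus $E_\pm\in\ker\Phi$; the paper's route has the mild advantage of not needing to unwind the definition of $\Phi^*$ as the metric adjoint. One caution about your optional ``derive $\Phi E_\pm=0$ from scratch'' aside: the identity $\Phi^3+\Phi=0$ is itself equivalent to $\Phi E_\pm=0$ (composing $\Phi$ with the formula for $\Phi^2$ gives exactly $\Phi^3+\Phi=\Phi\circ(E_+\otimes E_-+E_-\otimes E_+)$), so using $\Phi^3+\Phi=0$ as an independent input there is circular unless you first establish it by other means. Since your primary justification is to quote the kernel description $\ker\Phi=L_{E_+}\oplus L_{E_-}$ already recorded in the paper (equivalently, Sekiya's Lemma 3.5, which is exactly what the paper's own proof cites), the proof stands as written.
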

\begin{proof} By Lemma 3.5 in \cite{[S]}, we know that $\Phi(E_\pm)=0$. Using this together with (\ref{phi}) we have for any $X +\alpha \in TM \oplus T^*M$,
\begin{align*}
					0&= \Phi^3(X+\alpha ) +\Phi (X + \alpha ) \\
					  &=\Phi^2(\Phi(X + \alpha )) + \Phi (X+\alpha )\\
					  &=\langle E_+, \Phi (X + \alpha) \rangle E_- + \langle E_-, \Phi (X + \alpha )\rangle E_+.
\end{align*}
Having shown $0 = \langle E_+, \Phi (X + \alpha) \rangle E_- + \langle E_-, \Phi (X + \alpha )\rangle E_+$, we conclude that both terms vanish since $E_+$ and $E_-$ are independent everywhere.

\end{proof}
\begin{lemma} Let $(M, \Phi, E_+, E_-)$ be a generalized almost contact structure.  Then $\Phi $ is strong if and only if  $[[L^+, E^{(1,0)}]] \subset E^{(1,0)}$ and $[[L^-, E^{(1,0)}]] \subset E^{(1,0)}$.
\end{lemma}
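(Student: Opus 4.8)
The plan is to reduce both involutivity conditions to brackets of a generating set and then compare the two sides directly. First I would record the structural facts I need. Writing $\pi\colon TM\oplus T^*M\to TM$ for the projection, the Courant bracket obeys the Leibniz rule $[[a,fb]] = f[[a,b]] + (\pi(a)f)\,b - \langle a,b\rangle\,df$; since $\langle\,,\rangle$ vanishes on an isotropic bundle and $L^\pm$ are maximal isotropic, the last term drops for sections of $L^\pm$, so whether $[[s,t]]$ lands in $L^\pm$ or in $E^{(1,0)}$ is determined by the values of $s,t$ on generators. Thus I may test $L^+$ on the generators $E_+$ and $E^{(1,0)}$, and $L^-$ on $E_-$ and $E^{(1,0)}$. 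Second, because $\Phi(E_\pm)=0$ while $E^{(1,0)}$ is the $\sqrt{-1}$-eigenbundle, the summands $L_{E_+}$, $L_{E_-}$, $E^{(1,0)}$ are independent, whence $L^+\cap L^- = E^{(1,0)}$.

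The direction ($\Leftarrow$) is the easy one. If $[[L^+,E^{(1,0)}]]\subset E^{(1,0)}$, then every generator bracket of $L^+$, namely $[[E_+,E^{(1,0)}]]$ and $[[E^{(1,0)},E^{(1,0)}]]$ (the bracket $[[E_+,E_+]]$ vanishing by skew-symmetry), lies in $E^{(1,0)}\subset L^+$, so $L^+$ is involutive; the mirror argument for $L^-$ gives its involutivity, and hence $\Phi$ is strong.

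For ($\Rightarrow$) assume $\Phi$ is strong. The bracket of two sections of $E^{(1,0)}$ lies in $[[L^+,L^+]]\cap[[L^-,L^-]]\subset L^+\cap L^- = E^{(1,0)}$, which settles that part of each containment. The decisive point is the mixed bracket $[[E_+,w]]$ with $w\in E^{(1,0)}$: involutivity of $L^+$ yields only $[[E_+,w]] = cE_+ + w'$ with $w'\in E^{(1,0)}$, and I must show the coefficient $c$ vanishes, so that $[[E_+,w]]\in E^{(1,0)}$ (and symmetrically for $[[E_-,w]]$). Pairing with $E_-$ and using $2\langle E_+,E_-\rangle = 1$ together with $\langle E_-, E^{(1,0)}\rangle = 0$ (a consequence of the preceding lemma, since both $\langle E_-, X+\alpha\rangle$ and $\langle E_-,\Phi(X+\alpha)\rangle$ vanish on the generators of $E^{(1,0)}$) reduces the claim to $\langle E_-, [[E_+,w]]\rangle = 0$.

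I expect the main obstacle to be precisely this last vanishing, since neither involutivity hypothesis constrains the $E_+$-component of $[[E_+,w]]$ on its own. My plan is to transfer it to a statement about $[[E_+,E_-]]$ via the compatibility of the bracket with $\langle\,,\rangle$: passing to the Dorfman bracket $a\circ b = [[a,b]] + d\langle a,b\rangle$, which satisfies $\pi(a)\langle b,c\rangle = \langle a\circ b,c\rangle + \langle b,a\circ c\rangle$, and feeding in $(a,b,c)=(E_+,w,E_-)$ (all the $d\langle\,,\rangle$ terms dropping because $\langle E_+,w\rangle=\langle w,E_-\rangle=0$ and $\langle E_+,E_-\rangle$ is constant), I obtain $\langle E_-,[[E_+,w]]\rangle = -\langle w,[[E_+,E_-]]\rangle$. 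The task thus becomes showing that $[[E_+,E_-]]$ has no component along $E^{(0,1)}$. To control $[[E_+,E_-]]$ I would use that, as $E_\pm$ are real, conjugation makes $\overline{L^+} = L_{E_+}\oplus E^{(0,1)}$ and $\overline{L^-} = L_{E_-}\oplus E^{(0,1)}$ involutive as well, and then combine these four involutive maximal isotropics with the Leibniz identity for the Dorfman bracket on the triple $(w,E_+,E_-)$ to eliminate the $E^{(0,1)}$-component. Verifying that this bookkeeping genuinely forces $c=0$ is the crux; once it is secured, $[[E_+,E^{(1,0)}]]\subset E^{(1,0)}$ and, by the symmetric computation, $[[E_-,E^{(1,0)}]]\subset E^{(1,0)}$ follow, completing the forward direction.
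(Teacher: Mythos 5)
Your reduction of the backward direction, and of the bracket $[[E^{(1,0)},E^{(1,0)}]]$, is sound --- indeed the observation $[[E^{(1,0)},E^{(1,0)}]]\subset [[L^+,L^+]]\cap[[L^-,L^-]]\subset L^+\cap L^-=E^{(1,0)}$ is cleaner than what the paper writes --- and you have correctly isolated the one delicate point of the forward direction: showing that the $E_+$-coefficient $c$ of $[[E_+,w]]\in L^+=L_{E_+}\oplus E^{(1,0)}$ vanishes. But that is precisely where your proposal stops being a proof. You reformulate $c=0$ (correctly, via $\pi(a)\langle b,c\rangle=\langle a\circ b,c\rangle+\langle b,a\circ c\rangle$) as the assertion that $[[E_+,E_-]]$ has no $E^{(0,1)}$-component, i.e.\ that $[[E_+,E_-]]\in\ker\Phi$, and then only say you \emph{would} combine the involutivity of $L^{\pm}$ and $\overline{L^{\pm}}$ with the Leibniz identity on $(w,E_+,E_-)$ to force this, conceding that ``verifying that this bookkeeping genuinely forces $c=0$ is the crux.'' That concession is a genuine gap, and it is not clear the plan can be executed as stated: none of the four involutive maximal isotropics $L^{\pm},\overline{L^{\pm}}$ contains both $E_+$ and $E_-$, so involutivity places no direct constraint on $[[E_+,E_-]]$, and the identity $w\circ(E_+\circ E_-)=(w\circ E_+)\circ E_-+E_+\circ(w\circ E_-)$ reintroduces the unknown quantity rather than eliminating it.

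The paper closes this step by an entirely different, pointwise algebraic device: writing $[[X+\alpha+r_1E_+,\,Y+\beta]]=W+\rho+r_2E_+\in L^+$ and computing $\Phi^2$ of it in two ways --- once directly, using $\Phi(E_+)=0$ and $\Phi=\sqrt{-1}$ on $E^{(1,0)}$, and once from the structure equation $\Phi^2=-\mathrm{Id}+E_+\otimes E_-+E_-\otimes E_+$ --- and comparing $E_+$-components to force $r_2=0$; no Dorfman-bracket or conjugate-bundle machinery enters. (Be aware that this comparison is sensitive to the normalization of $E_\pm\otimes E_\mp$ against the pairing $\langle\,,\rangle$, so if you adopt it you should check the $E_+$-components do not cancel identically under your conventions.) To complete your own route you would have to actually prove that strongness forces $[[E_+,E_-]]\in\ker\Phi$, which is exactly the unresolved content; as written, the forward direction of your argument is a plan rather than a proof.
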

\begin{proof}  Assume $\Phi$ is strong. Let $X+\alpha +r_1E_+ \in L^+$ where $X + \alpha \in E^{(1,0)}$ and $r_1$ is any real-valued function.  Let $ Y+\beta  \in E^{(1,0)}$.  Then, since $\Phi$ is strong, $[[ X+\alpha +r_1E_+, Y + \beta]] = W +\rho + r_2 E_+$ for some $W + \rho \in E^{(1,0)}$ and $r_2$ is any real-valued function. Thus $$ \Phi ( [[ X+\alpha +r_1E_+, Y + \beta]]) = \Phi (W+\rho +r_2E_+) = \sqrt{-1} (W+ \rho ) \in E^{(1,0)}.$$  and
$$ \Phi^2 ( [[ X+\alpha +r_1E_+, Y + \beta]]) = \Phi ( \sqrt{-1} (W+ \rho ) ) = -(W + \rho) \in E^{(1,0)}.$$
But we also have that
\begin{align*}
 \Phi^2 ( [[ X+\alpha +r_1E_+, Y + \beta]]) =& -[[ X+\alpha +r_1E_+, Y + \beta]] \cr
&+ E_+ \otimes E_- ([[ X+\alpha +r_1E_+, Y + \beta]] ) \cr &+ E_- \otimes E_+([[ X+\alpha +r_1E_+, Y + \beta]]).
\end{align*}

\noindent Thus
\begin{align*}
 &E_+ \otimes E_- ([[ X+\alpha +r_1E_+, Y + \beta]] ) = 0 \cr &{\rm and}  -[[ X+\alpha +r_1E_+, Y + \beta]] + (E_- \otimes E_+)([[ X+\alpha +r_1E_+, Y + \beta]])  \in E^{(1,0)}.
\end{align*}

Thus $ -(W+\rho +r_2E_+)  + <E_-, W+\rho +r_2E_+>E_+ \in E^{(1,0)}$.

Hence $-(W+\rho) - \frac{1}{2}r_2E_+ \in E^{(1,0)}$.
This implies $r_2 =0$ and $ [[ X+\alpha +r_1E_+, Y + \beta]] \in E^{(1,0)}$ as desired.

A similar argument shows $[[L^-, E^{(1,0)}]] \subset E^{(1,0)}$.

For the other direction, the assumptions $[[L^+, E^{(1,0)}]] \subset E^{(1,0)}$ and $[[L^-, E^{(1,0)}]] \subset E^{(1,0)}$ clearly imply $\Phi $ is strong.

\end{proof}

We list here a criterion for integrability (see \cite{[G1]},\cite{[V1]}) that will be used in the proof of Theorem 1:

Given a generalized almost complex structure $(M, {\mathcal J})$, for $X + \alpha$ and $Y + \beta \in TM \oplus T^*M$, we define the Courant-Nijenhuis tensor
$N_{\mathcal J}(X+ \alpha , Y + \beta ) = [[{\mathcal J}(X+ \alpha ), {\mathcal J} (Y + \beta)]] - {\mathcal J}([[X + \alpha , {\mathcal J}( Y + \beta )]]) - {\mathcal J} ({[[\mathcal J}(X + \alpha ), Y + \beta ]]) + {\mathcal J}^2([[X+ \alpha , Y + \beta ]]).$ We say that ${\mathcal J}$ is integrable if $N_{\mathcal J} = 0$ for all sections of $TM \oplus T^*M$ \cite{[V1]}.\\
Let us now return to Theorem 1:
\begin{proof}[Proof of Theorem 1]We begin by remarking that $T(M_1 \times M_2) \oplus T^*(M_1 \times M_2) \approx (TM_1 \oplus T^*M_1) \oplus (TM_2 \oplus T^*M_2)$. From the generalized almost contact structures on $M_1$ and $M_2$ we can define a generalized almost contact structure $\mathcal J$ on $M_1 \times M_2$ by
\begin{align*}
\mathcal J (X_1 +\alpha_1,X_2 +\alpha_2 ) = &( \Phi_1(X_1+\alpha_1) - 2\langle E_{+,2},X_2 +\alpha_2 \rangle E_{+,1} - 2\langle E_{-,2}, X_2 +\alpha_2 \rangle E_{-,1}, \\
							               &\Phi_2(X_2 +\alpha_2)  + 2\langle E_{+,1}, X_1 +\alpha_1 \rangle E_{+,2} + 2\langle E_{-,1}, X_1 +\alpha_1 \rangle E_{-,2} ).
\end{align*}

Using Lemma 1, and the observation from \cite{[S]} that $\Phi (E_\pm )= 0$ for any generalized almost contact structure $(\Phi, E_+, E_-)$ as well as equations (\ref{phi}) and (\ref{sections}), it is straightforward to compute $\mathcal J^2 = -Id$ and $\mathcal J + \mathcal J^* = 0$.

We see from considering the eigenvalue condition with the explicit formula for $\mathcal J$ given above that the $\sqrt{-1}$ eigenbundle of $\mathcal J$ is generated by

\begin{align*}
&(E_1^{(1,0)}, 0) \\
&(0, E_2^{(1,0)}) \\
&( E_{-,1}, -\sqrt{-1} E_{+,2} ) \\
&( E_{+,1}, -\sqrt{-1} E_{-,2}).
\end{align*}

Assume $(M_1 \times M_2,  {\mathcal J})$ is integrable.  To show $\Phi_{1}$ is strong, we must show that $L^{+}_{1}$
and $L^{-}_{1}$ are closed under the Courant bracket. Since $\sqrt{-1}$ eigenbundle of $\mathcal J$ is closed under the Courant bracket we have that
$$[[(E_1^{(1,0)}, 0), (E_1^{(1,0)}, 0)]] = ([[E_1^{(1,0)},E_1^{(1,0)}]],0) \subset ( E_1^{(1,0)},0) $$
so that $[[E_1^{(1,0)},E_1^{(1,0)}]] \subset E_1^{(1,0)}$.
Also,
$$[[(E_1^{(1,0)}, 0), (E_{+,1}, -\sqrt{-1} E_{-,2} )]] = ([[E_1^{(1,0)},E_{+,1}]], 0) \subset ( E_1^{(1,0)},0)$$
so that $[[E_1^{(1,0)},E_{+,1}]] \subset  E_1^{(1,0)}$. Furthermore, it is clear that $[[L_{E_{\pm,i}},L_{E_{\pm,i}}]]\subseteq L_{E_{\pm,i}}$. Therefore, $L_{1}^{+}$ is closed.
Finally, to argue that $L^{-}_{1}$ is closed it suffices to note that
$$[[(E_1^{(1,0)}, 0), (E_{-,1}, -\sqrt{-1} E_{+,2} )]] = ([[E_1^{(1,0)},E_{-,1}]], 0) \subset ( E_1^{(1,0)},0)$$
obtaining $[[E_1^{(1,0)},E_{-,1}]] \subset  E_1^{(1,0)}$. Hence $\Phi_1$ is strong.

A similar argument using $(0, E_2^{(1,0)})$, $ (E_{+,1}, -\sqrt{-1} E_{-,2} )$, and $( E_{-,1}, -\sqrt{-1} E_{+,2})$ shows that $\Phi_2$ is strong.

To show $[[E_{\pm, i},E_{\mp ,i}]] = 0$, recall that $ {\mathcal J}$ integrable implies $N_{\mathcal J} = 0$.  In particular.
$$( 0,0) = N_{\mathcal J}(( E_{+,1},0), (E_{-,1},0)) = (-[[E_{+,1},E_{-,1}]], [[E_{+,2}, E_{-,2}]]).$$
Thus $[[E_{+,1},E_{-,1}]] = 0$ and $ [[E_{+,2}, E_{-,2}]] = 0$.

To prove the other direction, we assume $\Phi_1$ and $\Phi_2$ are strong and $[[E_{\pm, i},E_{\mp ,i}]] = 0$.  We show $(M_1 \times M_2, {\mathcal J})$ is an integrable generalized complex structure by showing that the generators of the $\sqrt{-1}$-eigenbundle of ${\mathcal J}$ are closed under the Courant bracket. First, observe that we have

$$[[(E_1^{(1,0)},0), (E_1^{(1,0)},0)]] = ([[E_1^{(1,0)}, E_1^{(1,0)}]],0) \subset (E_1^{(1,0)},0)$$ by Lemma 2.

Similarly,

$$[[(E_1^{(1,0)},0), (E_{\pm,1}, -\sqrt{-1}E_{\mp,2})]] = ([[E_1^{(1,0)}, E_{\pm,1}]],0) \subset (E_1^{(1,0)},0)$$


and
$$[[(0,E_2^{(1,0)}), (0,E_2^{(1,0)})]] = (0, [[E_2^{(1,0)}, E_2^{(1,0)}]]) \subset (0,E_2^{(1,0)}).$$

Furthermore,
$$[[(0,E_2^{(1,0)}), (E_{\pm,1}, -\sqrt{-1}E_{\mp,2})]] = (0, [[E_2^{(1,0)},  -\sqrt{-1}E_{\mp,2}]]) \subset (0,E_2^{(1,0)}).$$


Since $[[E_{\pm, i},E_{\mp ,i}]] = 0$, it is straightforward to compute that

$$[[ (E_{+,1}, -\sqrt{-1}E_{-,2}) , (E_{-,1}, -\sqrt{-1}E_{+2})]] = (0,0).$$

Lastly, by direct computation

$$[[(E_1^{(1,0)},0), (0, E_2^{(1,0)})]]\subseteq (E^{(1,0)}_{1},0)\oplus(0,E_{2}^{(1,0)})$$

\end{proof}

If $M_1$ is an odd dimensional manifold with a generalized almost contact structure $(\Phi, E_+, E_-)$ and $M_2$ and even dimensional manifold admitting a generalized complex structure $\mathcal J$ then we can ask if its odd dimensional product $M_1\times M_2$ admits a generalized almost contact structure and, if it does, what one can say about its integrability.  We answer this question in the following:

\begingroup
\def\thetheorem{\ref{T2}}
\begin{theorem}
Let $M_1$ be an odd dimensional manifold with a generalized almost contact structure $(\Phi, E_+, E_-)$.  Let $M_2$ be an even dimensional manifold with a generalized almost complex structure $\mathcal J$.  Then $M_1 \times M_2$ admits a generalized almost contact structure, $\Psi$.  Further $\Psi $ is a generalized contact structure if and only if $\Phi$ is a generalized contact structure and $\mathcal J$ is a generalized complex structure.   $\Psi $ is a strong generalized contact structure if and only if $\Phi$ is a strong  generalized contact structure and $\mathcal J$ is a generalized complex structure.
\end{theorem}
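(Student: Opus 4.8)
The plan is to mirror the construction of Theorem~\ref{T1} and build the structure on the product as the block-diagonal combination of the two given structures. Under the identification $T(M_1\times M_2)\oplus T^*(M_1\times M_2)\approx(TM_1\oplus T^*M_1)\oplus(TM_2\oplus T^*M_2)$ I would set
$$\Psi(X_1+\alpha_1,X_2+\alpha_2)=(\Phi(X_1+\alpha_1),\,\mathcal J(X_2+\alpha_2)),\qquad \mathcal E_\pm=(E_\pm,0).$$
The first task is to verify the three axioms of a generalized almost contact structure for $(\Psi,\mathcal E_+,\mathcal E_-)$. This is routine: $\Psi+\Psi^*=0$ follows from $\Phi+\Phi^*=0$ and $\mathcal J+\mathcal J^*=0$; the pairing conditions $\langle\mathcal E_\pm,\mathcal E_\pm\rangle=0$ and $2\langle\mathcal E_+,\mathcal E_-\rangle=1$ reduce at once to those for $E_\pm$; and $\Psi\circ\Psi=-Id+\mathcal E_+\otimes\mathcal E_-+\mathcal E_-\otimes\mathcal E_+$ follows blockwise from $\Phi\circ\Phi=-Id+E_+\otimes E_-+E_-\otimes E_+$ on the first factor and $\mathcal J^2=-Id$ on the second.

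Next I would pin down the relevant maximal isotropics. One checks $\ker\Psi=(L_{E_+}\oplus L_{E_-},0)$, so $\mathcal E_\pm$ span $\ker\Psi$ as required. Since orthogonality of $(X_1+\alpha_1,X_2+\alpha_2)$ to $\mathcal E_\pm$ amounts to $\langle E_\pm,X_1+\alpha_1\rangle=0$ with no constraint on the second factor, applying $v-\sqrt{-1}\,\Psi v$ blockwise shows that the $\sqrt{-1}$-eigenbundle of $\Psi$ is $\mathcal E^{(1,0)}=(E^{(1,0)},0)\oplus(0,L)$, where $E^{(1,0)}$ is the $\sqrt{-1}$-eigenbundle of $\Phi$ and $L$ is the $\sqrt{-1}$-eigenbundle of $\mathcal J$. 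Consequently
$$\mathcal L^{\pm}=L_{\mathcal E_\pm}\oplus\mathcal E^{(1,0)}=(L^{\pm},0)\oplus(0,L),$$
where $L^{\pm}=L_{E_\pm}\oplus E^{(1,0)}$ are the maximal isotropics attached to $\Phi$; a rank count confirms each is maximal isotropic in the product.

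The heart of the argument, and the step I expect to be the main obstacle, is to show that involutivity of $\mathcal L^{\pm}$ decouples into involutivity of $L^{\pm}$ on $M_1$ and of $L$ on $M_2$. The essential point is that the Courant bracket respects the product splitting: for sections pulled back from the two factors one has $[[(a,0),(0,b)]]=0$, since the operations defining the bracket differentiate only in the directions of a single factor, while $[[(a,0),(c,0)]]=([[a,c]],0)$ and $[[(0,b),(0,d)]]=(0,[[b,d]])$ recover the factor brackets. Because the failure of a maximal isotropic to be closed under the Courant bracket is tensorial, it suffices to test closure on a local frame, which may be taken to consist of pullbacks of local frames $\{e_a\}$ of $L^{\pm}$ and $\{f_b\}$ of $L$; the delicate point here is precisely the justification that testing on such generators is legitimate, as was used implicitly in Theorem~\ref{T1}. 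Granting this, closure of $\mathcal L^{+}$ forces $[[e_a,e_c]]\in L^{+}$ and $[[f_b,f_d]]\in L$, and conversely these two conditions give closure of $\mathcal L^{+}$; hence $\mathcal L^{+}$ is involutive if and only if both $L^{+}$ and $L$ are, and the same holds for $\mathcal L^{-}$.

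Finally I would assemble the equivalences by a purely logical step. Since $\mathcal L^{\pm}$ is involutive exactly when $L^{\pm}$ and $L$ are both involutive, the triple $\Psi$ is a generalized contact structure (that is, $\mathcal L^{+}$ or $\mathcal L^{-}$ is involutive) precisely when $L$ is involutive and at least one of $L^{+},L^{-}$ is, i.e.\ when $\Phi$ is a generalized contact structure and $\mathcal J$ is a generalized complex structure. Likewise $\Psi$ is strong (both $\mathcal L^{+}$ and $\mathcal L^{-}$ involutive) precisely when $L^{+}$, $L^{-}$ and $L$ are all involutive, i.e.\ when $\Phi$ is strong and $\mathcal J$ is integrable. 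This delivers both asserted equivalences.
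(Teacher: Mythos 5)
Your proposal is correct and follows essentially the same route as the paper: the block-diagonal $\Psi$ with sections $(E_\pm,0)$, the identification of the $\sqrt{-1}$-eigenbundle as $E^{(1,0)}\oplus L$, and the observation that Courant involutivity of $L_{E_\pm}\oplus E^{(1,0)}\oplus L$ decouples into involutivity of $L^\pm$ and of $L$. Your added remarks on why the bracket respects the product splitting and why testing on generators suffices only make explicit what the paper leaves implicit.
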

\addtocounter{theorem}{-1}
\endgroup

\begin{proof} Again we note that $T(M_1 \times M_2) \oplus T^*(M_1 \times M_2) \approx (TM_1 \oplus T^*M_1) \oplus (TM_2 \oplus T^*M_2)$.  Define an automorphism $\Psi$ on $(TM_1 \oplus T^*M_1) \oplus (TM_2 \oplus T^*M_2)$ by
$$\Psi (X_1 + \alpha_1, X_2 + \alpha_2 ) = \Phi (X_1 + \alpha_1) + \mathcal J (X_2 + \alpha_2 )$$ for $X_1 + \alpha_1 \in TM_1 \oplus T^*M_1$ and $X_2 + \alpha_2 \in TM_2 \oplus T^*M_2$.
We use the sections $E_+$ and $E_-$ from the generalized almost contact data on $M_1$ to construct the needed sections on $M_1 \times M_2$.

Let $E_+ \in \Gamma   (TM_1 \oplus T^*M_1)$ be given by $X_+ +\alpha_+$ and let  $E_- \in \Gamma   (TM_1 \oplus T^*M_1)$ be given by $X_- +\alpha_-$.

Define $$E_+^{M_1\times M_2} =( \iota_q)_*(X_+ ) + \pi^*\alpha_+$$ and $$E_-^{M_1\times  M_2} =( \iota_q)_*(X_-) + \pi^*\alpha_-$$
 where $\iota_q = (p,q)$ is the standard inclusion map
$\iota: M_1 \rightarrow M_1 \times M_2$ and  $\pi (p,q) = p$ is the standard projection, $ \pi : M_1\times M_2 \rightarrow M_1$.
Then it is straightforward to compute that
 $$\Psi +\Psi^* = 0$$
 $$\Psi^2 = -Id + E_+^{M_1\times M_2} \otimes E_-^{M_1\times M_2} + E_-^{M_1\times M_2} \otimes E_+^{M_1\times M_2} $$
$$\langle E_\pm^{M_1 \times M_2}, E_\pm^{M_1 \times M_2} \rangle = 0$$
$$2 \langle E_+^{M_1 \times M_2}, E_- ^{M_1 \times M_2}\rangle = 1$$
Thus $(\Psi, E_+^{M_1 \times M_2}, E_-^{M_1 \times M_2})$ forms  a generalized almost contact structure on $M_1\times M_2$.

The $\sqrt{-1}$ eigenbundle of $\Psi$ is $E^{(1,0)} \oplus L$ where $E^{(1,0)}$ is the $\sqrt{-1}$ eigenbundle of $\Phi$ and $L$ is the $\sqrt{-1}$ eigenbundle of $\mathcal J$.  The line bundle spanned by  $ E_{+}^{M_1\times M_2}$,  $L_{E_{+}^{M_1\times M_2} }$, is isomorphic to $L_{E_+}$.  Similarly, $L_{E_{-}^{M_1\times M_2} }$ is isomorphic to $L_{E_-}$. So one of
\begin{align*}
L^\pm_{M_1\times M_2} &= L_{E_\pm^{M_1 \times M_2} }+ E^{(1,0) }+ L  \\
&\approx L^\pm_{M_1} + L
\end{align*}
is Courant involutive  if and only if $\Phi$ is a generalized contact structure and $\mathcal J$ is a generalized complex structure. Both $L^+_{M_1\times M_2}$ and $L^-_{M_1\times M_2}$  are Courant involutive if and only if $\Phi$ is a strong generalized contact structure and $\mathcal J$ is a generalized complex structure.
\end{proof}

\section{ Normal Almost Contact Structures and a Geometric Interpretation of $[[E_+ ,E_- ]] = 0$.}

Given a classical almost contact structure $(\phi , \xi, \eta )$ on an odd dimensional manifold $M^{2n+1}$, one can construct an almost complex structure $J$ on $M \times \mathbb R$ (see \cite{[B]})
$$J ( X, f \frac{\partial }{ \partial t})  = (  \phi (X) - f \xi , \eta (X)\frac{\partial }{ \partial t})$$
where $X$ is a vector field on $M$, $t$ is the coordinate on $\mathbb R$ and $f$ is a $C^\infty$ function.  If $J$ is integrable we say the almost contact structure $(\phi, \xi, \eta )$ is normal. Let's consider this situation in the generalized geometry context:

We associate a generalized almost contact structure  $ \Phi = \left ( \begin{array}{cc}  \phi & 0 \\ 0 & -\phi^* \end{array} \right )$, $E_+ = \xi,\  \   E_-= \eta $ to the classical almost contact structure on $M$ as in Example 3.  The real line $ \mathbb{R}$ has a standard classical almost contact structure $(\phi_{\mathbb R} = 0, \frac{\partial }{ \partial t}, dt )$. Associate to it a generalized almost contact structure $ \Phi_{\mathbb R}$,  $ E_{+, {\mathbb R}} = dt$,$E_{-, {\mathbb R}} = \frac{\partial}{\partial t}$.  Using Theorem 1, we get  a generalized almost complex structure on $M\times \mathbb R$  that corresponds to the classical almost complex structure $J$ on $M \times \mathbb R$ given above.  The integrability of $J$ and $\mathcal J$ are determined by whether $\Phi$ is strong and the bracket condition $[[E_+, E_-]] = 0$. These conditions reduce to the classical conditions for normality of the almost contact structure written in terms of $\phi$, $\xi$, and $\eta$. (See \cite {[B]})
\begin{align*}
[\phi , \phi ](X,Y) + 2 d\eta (X, Y) \xi &= 0 \\
(\mathcal L_{\phi X}\eta ) (Y) - \mathcal L_{\phi Y}\eta (X) &=0 \\
\mathcal L_\xi \phi (X) &=0 \\
\mathcal L_\xi \eta (X ) &= 0
\end{align*}
where $X$ and $Y$ are vector fields on $M$. In particular the condition $[[ E_+, E_-]] = 0$ corresponds to the equation $\mathcal L_\xi \eta = 0$.  This  provides an opening for a more geometric understanding of the Courant bracket.

\section*{acknowledgements}
We thank Charles P. Boyer for useful conversations.

The second author thanks the Courant Institute for Mathematical Sciences for their hospitality during the work on this paper.
Both authors would like to thank the referee for helpful suggestions.

\end{document}